\def\bSig\mathbf{\Sigma}
\title[AW-Fisher's method]{Properties of adaptively weighted Fisher's method}
\author{Yusi Fang$^{1,*}$\email{yuf31@pitt.edu},
Shaowu Tang$^{2,**}$\email{shaowu.tang@roche.com },
Zhiguang Huo$^{1,**}$\email{zhh18@pitt.edu} and
George C. Tseng$^{1,3,4,***}$\email{ctseng@pitt.edu},
Yongseok Park$^{1,****}$\email{yongpark@pitt.edu} \\
$^{1}$Department of Biostatistics, University of Pittsburgh, Pittsburgh, Pennsylvania, U.S.A. \\
$^{2}$Roche Molecular Systems, Inc \\
$^{3}$Department of Human Genetics, University of Pittsburgh, Pittsburgh, Pennsylvania, U.S.A. \\
$^{4}$Department of Computational \& Systems Biology, University of Pittsburgh, Pittsburgh, Pennsylvania, U.S.A.}
\begin{document}


\date{{\it Received October} 2007. {\it Revised February} 2008.  {\it
Accepted March} 2008.}



\pagerange{\pageref{firstpage}--\pageref{lastpage}}
\pubyear{2019}




\label{firstpage}


\begin{abstract}
Meta-analysis is a statistical method to combine results from multiple clinical or genomic studies with the same or similar research problems. It has been widely use to increase statistical power in finding clinical or genomic differences among different groups. One major category of meta-analysis is combining p-values from independent studies and the Fisher's method is one of the most commonly used statistical methods. However, due to heterogeneity of studies, particularly in the field of genomic research, with thousands of features such as genes to assess, researches often desire to discover this heterogeneous information when identify differentially expressed genomic features. To address this problem, \citet{Li2011adaptively} proposed very interesting statistical method, adaptively weighted (AW) Fisher's method,
where binary weights, $0$ or $1$, are assigned to the studies for each feature to distinguish potential zero or none-zero effect sizes. \citet{Li2011adaptively} has shown some good properties of AW fisher's method such as the admissibility. In this paper, we further explore some asymptotic properties of AW-Fisher's method including consistency of the adaptive weights and the asymptotic Bahadur optimality of the test.
\end{abstract}

%

\begin{keywords}
adaptive weights; Fisher's method; combining p-values; meta-analysis; consistency; asymptotic Bahadur optimality.
\end{keywords}


\maketitle


%

\section{Introduction}
Meta-analysis is one of the most commonly used statistical method to synthesize information from multiple studies, particularly when each single study does not have enough power to draw a meaningful conclusion due to weak signals or small effective sizes. There are two commonly used methods to combine results from different studies:
(1) directly combining the effect sizes and
(2) indirectly combining p-values from independent studies.
Because of heterogeneous nature of alternative distributions, slightly different study goals and types of data, in omics studies, combining p-values is often more appropriate and appealing. Commonly used p-value combing methods include the Fisher's method \citep{Fisher1925statistical},
the Stouffer's method \citep{Stouffer1949american}, the logit method \citep{lancaster1961combination}, and minimum p-value (min-P) and maximum p-value (max-P) methods \citep{Tippett1931methods,Wilkinson1951statistical}.

In addition, in omics studies, researchers are often more interested in identifying biomarkers that are differentially expressed (DE) with consistent patterns across multiple studies. However, most p-value combining methods such as Fisher's method are mainly targeting on the gain of statistical power without providing any further information about the heterogeneities of the expression patterns for detected biomarkers. This problem was first gained attention in functional magnetic resonance imaging (fMRI) research \citep{friston2005conjunction}, and many methods have been proposed to address this heterogeneity problem since then. For example, \citet{song2014hypothesis} proposed $r^{th}$ ordered p-value (rOP) method to test the alternative hypothesis in which signals exist in at least a given percentage of studies;
\cite{li2014meta} proposed a class of meta-analysis methods based on summaries of weighted ordered p-values (WOP); and \citet{Li2011adaptively} proposed an adaptively weighted (AW) Fisher's method for gene expression data, in which a binary weight, $0$ or $1$, is assigned to each study in order to distinguish the potential of existing group effects. Similar ideas such as AW-FEM and AW-Bayesian approach were applied to GWAS meta-analysis \citep{han2012interpreting, bhattacharjee2012subset}, where only the effect sizes in a subset of studies were assumed to be non-zero in alternative hypotheses \citep{flutre2013statistical}.

The AW-Fisher's method has appealing feature in practice. This is because additional information can be obtained though estimated adaptive weights for detected DE genes. The adaptive weight estimates $\hat{\textbf{w}}$ reflect a natural biological interpretation of whether or not a study contributes to the statistical significance of a gene on differentiating groups and provide a way for gene categorization in follow-up biological interpretations and explorations.

Here is a motivative example of the AW-Fisher's method. Figure \ref{Graph:motivativeExamp} shows the heatmaps of gene expressions for DE genes identified by Fisher's and AW-Fisher's methods for three tissue mouse datasets. Fisher's method does not provide any indication of contribution of studies to the statistical significance, while the adaptive weights of AW-Fisher's method can group together the genes that share the same gene expression pattern, therefore providing information of gene-specific heterogeneity. This information could be very appealing in genomic data analysis and potentially very useful to interpret biological mechanisms.

\begin{figure}[!ht]
\begin{center}
   \includegraphics[width=0.8\textwidth]{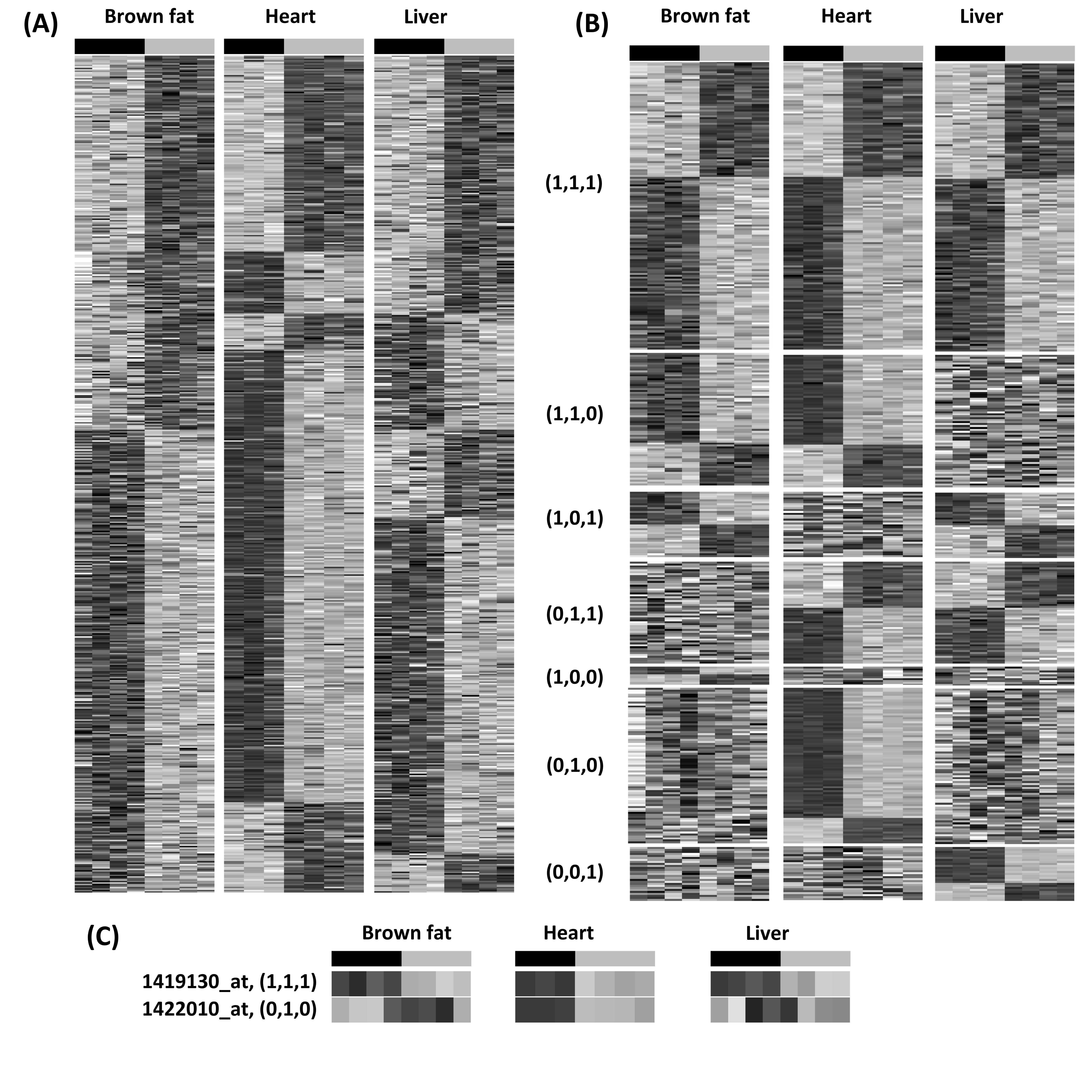}
\end{center}
\caption{Heatmaps of gene expressions for DE genes identified by Fisher's and AW-Fisher's methods in the mouse energy metabolism datasets.
(A) shows heatmap of gene expressions for DE genes identified by Fisher's method with false discovery rate 1\% (555 genes).
For each of the three tissues (Brown fat, Liver and Heart),
the group labels are on top of the heatmap, with black color represents wild type (WT)
and gray color represents Very long-chain acyl-CoA dehydrogenase (VLCAD) deficiency.
For the heatmap, darker color represent lower expression level and whiter color represent higher expression level.
(B) shows heatmap of gene expressions for DE genes identified by AW-Fisher's methods with false discovery rate 1\% (501 genes),
where only concordant genes are shown in the heatmap.
The AW weight categories are shown to the left of the heatmap.
(C) shows two specific genes from the mouse energy metabolism datasets.
Gene probe 1419130\_at belongs to $(1,1,1)$ category of AW-Fisher's methods.
Gene probe 1422010\_at belongs to $(0,1,0)$ category of AW-Fisher's methods.
}
\label{Graph:motivativeExamp}
\end{figure}

In this paper, we will further explore some asymptotic properties such as consistency of the adaptive weights and asymptotic Bahadur optimality (ABO) of the test \citep{bahadur1967rates}.

This paper is organized as follows. In Section 2, method will be briefly reviewed and in Section 3, consistency of AW-Fisher weights is addressed. Asymptotic Barhadur optimality will be discussed in Section 4. In section 5, simulations are used to show the consistency and exact slopes. The paper ends with discussion in Chapter 6.

\section{Adaptively weighted Fisher's method} \label{sec:AW-Fisher}
Considering to combine $K$ independent studies, denote the effect
size by $\vec{\boldsymbol{\theta}} = (\theta_1, \ldots, \theta_K) \in \mathbb{R}^K$ and let the corresponding p-value of study $k$ be $p_k$ for $k=1,\cdots,K$. The null and alternative hypothesis settings considered in this paper are
\begin{equation*}
 H_0:\ \vec{\boldsymbol{\theta}} \in \bigcap_{k=1}^K\{\theta_k=0\}\ \text{versus}\ H_A:\ \vec{\boldsymbol{\theta}} \in \bigcup_{k=1}^K\{\theta_k\neq 0\}.
\end{equation*}
Fisher's method summarizes p-values using the statistic  $T^{F}=-2\sum_{k=1}^K \log(p_k)$ to test this hypothesis setting. It has been shown that $T^{F}$ follows a $\chi^2$ distribution with $2K$ degrees of freedom if data from different studies are independent and there are no underlying difference from all studies ($\theta_k=0$ for all $k$). However, in genomic studies, usually tens of thousand of features are considered. The heterogeneous expression patterns are of interests. Fisher's method does not provide any information about the potential different expression patterns from different features.
\citet{Li2011adaptively} proposed an adaptively weighted Fisher's method to reveal this information through assigning binary weights.
Let vector $T(\vec{\textbf{w}}; \vec{\textbf{P}}) = -2 \sum_{k=1}^K w_k \log P_k$,
where $\vec{\textbf{w}} = (w_1, \ldots, w_K) \in {\{ 0,1 \} }^K$ is the AW weight associated with $K$ studies
and $\vec{\textbf{P}} = (P_1, \ldots, P_K) \in {(0,1)}^K$ is the random vector of  p-value vector for K studies.
Under the null distribution and conditional on $\vec{\textbf{w}}$,
the significance level using Fisher's method is
$L(T(\vec{\textbf{w}}; \vec{\textbf{P}})) = 1 - F_{\chi^2_{2d(\vec{\textbf{w}})}}(T(\vec{\textbf{w}}; \vec{\textbf{P}}))$,
where $d(\vec{\textbf{w}}) = 2\sum_{k=1}^Kw_k$ and
$F_{\chi^2_{2d(\vec{\textbf{w}})}}(\cdot)$ is the cumulative distribution function (CDF) of $\chi^2$-distribution
with degrees of freedom $2d(\vec{\textbf{w}})$.
The test statistic of AW-Fisher based on p-value vector $\vec{\textbf{P}}$ is defined as
$$s(\vec{\textbf{P}}) = -\log(\min_{\vec {\textbf{w}}} L(T(\vec{\textbf{w}}; \vec{\textbf{P}}))),$$
where optimal weight is determined by
\begin{equation}
\label{eq:weight}
\hat{ \textbf{w}} = \arg\min_{\vec{\textbf{w}}} L(T(\vec{\textbf{w}}; \vec{\textbf{p}})).
\end{equation}
Here $s(\cdot)$ is the mapping function from p-value vector to the AW-Fisher test statistic. Let $S = s(\vec{\textbf{P}})$, then equation (\ref{eq:weight}) implies that the best adaptive weights $\hat{ \textbf{w}}$ can be obtained by comparing all $2^K-1$ none-zero combinations of weights $\hat{ \textbf{w}}$.
In \citet{Li2011adaptively}, a permutation algorithm was proposed to calculate the p-value $P(S \ge  s_{obs})$,
where the observed  AW-statistic $s_{obs} = s(\vec{\textbf{p}}_{obs})$ and  $\vec{\textbf{p}}_{obs}$ is the observed p-values.
In Huo et al. (2017),
an importance sampling technique with spline interpolation and a linear weight search scheme is proposed to overcome computational burden and lack of accuracy for small p-values of the permutation algorithm.

\section{Consistency of the weight estimates}
\label{sec:Properties}

 Before we prove the consistency of AW-Fisher's weight estimates and the asymptotic Bahadur optimality of the AW-Fisher's method, we review exact slope and present some assumptions.

For $K$ independent studies to test $H_0: \theta_k=0$ with
sample size $n_k$ and p-value $p_k$ for $k=1,\cdots,K$, the statistical test for study $k$ has exact slope
$c_k(\theta)$, if
\begin{equation}
  -\frac{2}{n_k}\log(p_k)\rightarrow c_k(\theta)\ \mbox{as}\ n_k\rightarrow\infty.\nonumber
\end{equation}

The exact slope $c_k(\theta)$ is non-negative and used to measure how fast the p-value $p_k$ converge to zero
as $n_k$ goes to infinity. If the test statistics comes from alternative hypothesis, $c_k(\theta)$ is positive while under the null $c_k(\theta) = 0$.

When we consider the consistency, we assume the proportion of total samples assigned to each study are asymptotically fixed. I.e.
\begin{equation}
  \lim_{n\rightarrow\infty}\frac{n_k}{n}=\lambda_k\ \mbox{for}\ k=1,\cdots,K,\nonumber
\end{equation}
where $n=\frac{1}{K}\sum_{k=1}^K n_k$, the averaged sample size. Therefore \[-\frac{2}{n}\log(p_k)\rightarrow
\lambda_kc_k(\theta).\]

In addition, denote $100(1-\alpha)\%$
quantile of $\chi^2_m$ by $\chi^{-2}_m(\alpha)$, i.e.,
$P(\chi^2_{m}\geq \chi^{-2}_m(\alpha))=\alpha$. It can be seen that
$\chi^{-2}_m(\alpha)\rightarrow\infty$ as $\alpha\rightarrow 0$.

  Next we prove the main theorem of the consistency.

\begin{theorem} \label{thm:consistency}
Let ${\textbf{w}}^* = \{ \vec{\textbf{w}}: w_k = 1 \mbox{ if } \theta_k \ne 0 \text{ or } 0 \mbox{ if } \theta_k = 0\}$, the true weight vector.
$\hat{\textbf{w}}\rightarrow \textbf{w}^*$ as $n \rightarrow \infty$ where $\hat{\textbf{w}}$ statisfies Equation~(\ref{eq:weight}), i.e. asymptotically all and only the
studies with non-zero effect sizes will contribute to the AW statistic.
\end{theorem}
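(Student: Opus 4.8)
The plan is to recast the weight selection as a maximization: writing $g(\vec{\textbf{w}}) = -\log L(T(\vec{\textbf{w}}; \vec{\textbf{P}}))$, the rule in Equation~(\ref{eq:weight}) is $\hat{\textbf{w}} = \arg\max_{\vec{\textbf{w}}} g(\vec{\textbf{w}})$, and I would show that with probability tending to one the true weight $\textbf{w}^*$ is the unique maximizer. Because there are only $2^K-1$ candidate weight vectors, it suffices to prove $g(\textbf{w}^*) - g(\vec{\textbf{w}}) \to \infty$ for each fixed $\vec{\textbf{w}} \ne \textbf{w}^*$ and then take a union bound over the finite competitor set, yielding $P(\hat{\textbf{w}} = \textbf{w}^*) \to 1$.

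First I would pin down the asymptotics of the statistic. Writing $A = \{k : \theta_k \ne 0\}$ and $A^c = \{k : \theta_k = 0\}$, the exact-slope assumption gives $-\frac{2}{n}\log P_k \to \lambda_k c_k(\theta) > 0$ for $k \in A$ (assuming $\lambda_k > 0$), whereas for $k \in A^c$ the p-value is asymptotically uniform so $-2\log P_k = O_P(1)$. Hence $\tfrac1n T(\vec{\textbf{w}}; \vec{\textbf{P}}) \to C(\vec{\textbf{w}}) := \sum_{k \in A,\, w_k = 1}\lambda_k c_k(\theta)$ in probability, while the null studies switched on in $\vec{\textbf{w}}$ add only a bounded nonnegative amount to $T$. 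In particular $C(\textbf{w}^*) = C^* := \sum_{k\in A}\lambda_k c_k(\theta)$ is the largest attainable rate, and $C(\vec{\textbf{w}}) < C^*$ exactly when $\vec{\textbf{w}}$ omits some nonnull study.

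Next I would feed the divergence $T(\vec{\textbf{w}}; \vec{\textbf{P}}) \to \infty$ into the upper-tail expansion of the chi-square survival function. With $r = \sum_k w_k$ the number of active studies (so the reference law is $\chi^2_{2r}$), one has $-\log P(\chi^2_{2r} \ge t) = \tfrac{t}{2} - (r-1)\log t + O(1)$ as $t\to\infty$, giving $g(\vec{\textbf{w}}) = \tfrac12 T(\vec{\textbf{w}}; \vec{\textbf{P}}) - \big(\sum_k w_k - 1\big)\log T(\vec{\textbf{w}}; \vec{\textbf{P}}) + O_P(1)$, a leading linear term plus a logarithmic degrees-of-freedom correction. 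The comparison then splits in two. If $\vec{\textbf{w}}$ misses at least one true signal, the linear terms already separate, $g(\textbf{w}^*) - g(\vec{\textbf{w}}) = \tfrac{n}{2}\big(C^* - C(\vec{\textbf{w}})\big) + O_P(\log n) \to \infty$, an order-$n$ margin. If instead $\vec{\textbf{w}}$ retains every signal but switches on $b \ge 1$ spurious null studies, the linear terms cancel up to the bounded excess $D = O_P(1)$, and using $\log T(\vec{\textbf{w}}; \vec{\textbf{P}}) = \log n + O_P(1)$ the surviving term is the df penalty, $g(\textbf{w}^*) - g(\vec{\textbf{w}}) = b\log n - \tfrac{D}{2} + O_P(1) \to \infty$. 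Every $\vec{\textbf{w}} \ne \textbf{w}^*$ lies in one of these regimes, so the union bound completes the argument.

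I expect the second regime to be the crux. When the competitor already contains all true signals, the dominant linear growth of the Fisher statistic is identical for $\textbf{w}^*$ and $\vec{\textbf{w}}$, so consistency rests entirely on second-order behavior: one must show that inflating the reference chi-square by $2b$ degrees of freedom costs $\sim b\log n$, which strictly outweighs the $O_P(1)$ gain in the statistic from the spurious null p-values. Making this rigorous requires uniform-in-$\vec{\textbf{w}}$ control of the tail expansion and the observation that $\log T(\vec{\textbf{w}}; \vec{\textbf{P}}) = \log n + O_P(1)$ so the logarithmic penalty genuinely diverges. The first regime, by contrast, is routine once the exact-slope limits are available.
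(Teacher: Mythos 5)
Your proposal is correct and follows essentially the same route as the paper's proof: the same two-regime decomposition (competitors omitting a true signal are beaten by an order-$n$ margin in the exponent; competitors adding $b$ spurious null studies are beaten by the degrees-of-freedom penalty, which the paper expresses as a misclassification probability of order $1/n^{b}$ and you express as a $b\log n$ gap in $-\log L$), using the same finite-sum expansion of the $\chi^2_{2r}$ survival function and a union bound over the $2^K-1$ candidates. The only difference is presentational --- you compare log-survival functions where the paper bounds ratios of survival functions --- so the two arguments are computationally identical.
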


\begin{proof} Let
$$-\frac{2}{n}\sum_{j=1}^k \log(p_j) \rightarrow \sum_{j=1}^k \lambda_j c_j(\theta) = C_k$$
for $k=1,\dots,K,$ as $n \rightarrow \infty.$

Assume that $i$ studies have weight $1$ asymptotically. Without loss of generality, let first $i$ studies have weight $1$.

First, we prove that there is no study with $c(\theta) > 0$ with weight $0$. Suppose there exist $\ell$ such studies, say $(i+1)^{th},..., (i+\ell)^{th}$ such that $c_{i+1}(\theta),...,c_{i+\ell}(\theta) > 0$.
Denote $A_{i+\ell} = \frac{1-F_{\chi^{2}_{2i}}(-2\sum_{j=1}^i\log p_j)}{1-F_{\chi^{2}_{2(i+\ell)}}(-2\sum_{j=1}^{i+\ell}\log p_j)}$ and
$A_{i+\ell} < 1$ represents $\hat{w}_{i+1} = 0$ according to Equation~\ref{eq:weight}.
Since
\[
F_{\chi_{2i}^2}(t) = 1 - \sum_{j=0}^{i-1}\frac{1}{j!}\left(\frac{t}{2}\right)^j\exp\{-\frac{t}{2}\},
\]
we have
\[
\begin{split}
\lim_{n \rightarrow \infty} A_{i+\ell} &= \lim_{n \rightarrow \infty} \frac{1-F_{\chi^{2}_{2i}}(-2\sum_{j=1}^i\log p_j)}{1-F_{\chi^{2}_{2(i+\ell)}}(-2\sum_{j=1}^{i+\ell}\log p_j)}\\
& = \lim_{n \rightarrow \infty} \frac{1-F_{\chi^{2}_{2i}}(nC_i)}{1-F_{\chi^{2}_{2(i+\ell)}}(nC_{i+\ell})}\\
& = \lim_{n \rightarrow \infty} \frac{e^{-\frac{nC_i}{2}}\sum_{j=0}^{i-1}\frac{\left(\frac{nC_i}{2}\right)^j}{j!}}{e^{-\frac{nC_{i+\ell}}{2}}\sum_{j=0}^{i+\ell-1}\frac{\left(\frac{nC_{i+\ell}}{2}\right)^j}{j!}}\\
& = \lim_{n \rightarrow \infty} \frac{(i+\ell-1)!}{(i-1)!}\frac{C^{i-1}_i}{\left(\frac{n}{2}\right)^\ell C_{i+\ell}^{i+\ell-1}} (1+o(1)) \exp\left\{\frac{n}{2}(C_{i+\ell}-C_{i})\right\} \\
& \ge  \frac{1}{n^\ell} \frac{C^{i-1}_i}{ C_{i+\ell}^{i+\ell-1}}\exp\left\{\frac{n}{2}(C_{i+\ell}-C_{i})\right\} \\
& = \frac{1}{n^\ell} \frac{C^{i-1}_i}{ C_{i+\ell}^{i+\ell-1}}\exp\left\{\frac{n}{2}\left(\sum_{j=1}^{\ell}c_{i+j}\right)\right\}\\
& \rightarrow  \infty \\
\end{split}
\]

Then
\[
\begin{split}
&\lim_{n \rightarrow \infty} P(\hat{w}_{i+1} = 0,...,\hat{w}_{i+\ell}=0 | w^*_{i+1} = 1,...,w^*_{i+\ell} = 1)
= P(  \lim_{n \rightarrow \infty} A_{i+\ell} \le 1) \\
& \le  P\left(\lim_{n \rightarrow \infty }\frac{1}{n^\ell} \frac{C^{i-1}_i}{ C_{i+\ell}^{i+\ell-1}}\exp\left\{\frac{n}{2}(C_{i+\ell}-C_{i})\right\}\le1\right) = 0,
\end{split}
\]
i.e. the $\ell$ studies will eventually get a weight $1$ once $w^*_{i+1} \ne 0$.
The convergence rate is $O\left(n^\ell\exp\left\{-\frac{n}{2}\left(\sum_{j=1}^{\ell}c_{i+j}\right)\right\}\right)$.\\

Notice that here we only require that $C_{i+\ell}>0$ and $C_{i+\ell}>C_{i}$ to let the above argument hold.\\

Second, if there exist $\ell'$ studies with zero effect size that have a
weight $1$. Without loss of generality, let $\theta_{i-\ell'+1},...,\theta_i = 0$. In order to have
weight $1$ for these studies, one must have

\[
\begin{split}
P(\hat{w}_{i-\ell'+1} = 1,...,\hat{w}_{i}=1 | w^*_{i-\ell'+1} = 0,...,w^*_{i} = 0)  &
= P\left(  \frac{1-F_{\chi^{2}_{2i}}(-2\sum_{j=1}^i\log p_j)}{1-F_{\chi^{2}_{2(i-\ell')}}(-2\sum_{j=1}^{i-\ell'}\log p_j)} \le 1\right) \\
\end{split}
\]

Then we have

\[
\begin{split}
 &P(\hat{w}_{i-\ell'+1} = 1,...,\hat{w}_{i}=1 | w^*_{i-\ell'+1} = 0,...,w^*_{i} = 0)  \\ &
= P \left\{\sum_{j=0}^{i-1}\frac{1}{j!}\left(-\sum_{j=1}^i \log p_j\right)^j\exp\left(\sum_{j=1}^i \log p_j\right) \le \sum_{j=0}^{i-\ell'-1}\frac{1}{j!}\left(-\sum_{j=1}^{i-\ell'} \log p_j\right)^j\exp\left(\sum_{j=1}^{i-\ell'} \log p_j\right) \right\} \\
& = P \left(  \exp\left\{\sum_{j=i-\ell'+1}^i\log p_j\right\}\le\frac{\sum_{j=0}^{i-\ell'-1}\frac{1}{j!}\left(-\sum_{j=1}^{i-\ell'} \log p_j\right)^j}{\sum_{j=0}^{i-1}\frac{1}{j!}\left(-\sum_{j=1}^i \log p_j\right)^j} \right) \\
& \rightarrow P\left(\lim_{n \rightarrow \infty}\prod_{j=i-\ell'+1}^i p_i \le \left(\frac{n}{2}\right)^{-\ell'} \frac{(i-\ell'+1)!}{i!}C_{i-\ell'}^{i-\ell'-1}C_{i}^{-i+1}(1+o(1))\right) \\
& \rightarrow 0.
\end{split}
\]

Therefore, eventually no studies with zero effect size will have weight $1$ with convergence rate of $O(1/n^{\ell'})$.
From these two arguments we can see that only those with non-zero effect size will eventually be assigned to weight 1.
Note that the convergence rate for study with non-zero effective size to be weight 1 is
faster than that for study with zero effective size to be eventually assigned to 0.\\

 Let $\{p'_1, \dots, p'_j\}$ be a selected subset of $\{p_1, \dots, p_K\}$ with weights 1, while outside subset the weights are zero. Within the subset, assume $\{p'_1, \dots, p'_\ell\}$  have non-zero effect size, while the remainder have zero effect size. Further more, assume $\{p'_{j+1}, \dots, p'_{j+\ell'}\}$ outside the subset with true non-zero effect size. Denote $\{p_1, \dots, p_{\ell+\ell'}\}$ as the subset with the true non-zero effect size. \\

Based on the previous two results proved above, for $\ell>0$ ,then we have

 \[
\begin{split}
 &P(\hat{w}'_{1} = 1,...,\hat{w}'_{j}=1,\hat{w}'_{j+1}=0,...,\hat{w}'_{K}=0 | w^*_{1} = 1,...,w^*_{\ell+\ell'} = 1,w^*_{\ell+\ell'+1} = 0,...,w^*_{K} = 0)  \\ &
= P \left(\frac{1-F_{\chi^{2}_{2j}}(-2\sum_{i=1}^j\log p'_i)}{1-F_{\chi^{2}_{2(\ell+\ell')}}(-2\sum_{i=1}^{\ell+\ell'}\log p_i)} \le 1 \right) \\
& = P \left(  \frac{1-F_{\chi^{2}_{2j}}(-2\sum_{i=1}^j\log p'_i)}{1-F_{\chi^{2}_{2\ell}}(-2\sum_{i=1}^\ell\log p'_i)}\frac{1-F_{\chi^{2}_{2\ell}}(-2\sum_{i=1}^\ell\log p'_i)}{1-F_{\chi^{2}_{2(\ell+\ell')}}(-2\sum_{i=1}^{\ell+\ell'}\log p_i)} \le 1 \right) \\
& \le 2\left\{P\left(\frac{1-F_{\chi^{2}_{2j}}(-2\sum_{i=1}^j\log p'_i)}{1-F_{\chi^{2}_{2\ell}}(-2\sum_{i=1}^\ell\log p'_i)}\le1\right)+P\left(\frac{1-F_{\chi^{2}_{2\ell}}(-2\sum_{i=1}^\ell\log p'_i)}{1-F_{\chi^{2}_{2(\ell+\ell')}}(-2\sum_{i=1}^{\ell+\ell'}\log p_i)} \le 1\right) \right\}\\
& = 2\left\{P(\hat{w}'_{\ell+1} = 1,...,\hat{w}'_{j}=1| w'^*_{\ell+1} = 0,...,w'^*_{j} = 0)+P(\hat{w}'_{j+1} = 0,...,\hat{w}'_{j+\ell'}=0| w'^*_{j+1} = 1,...,w'^*_{j+\ell'} = 1) \right\}\\
& \rightarrow 0.
\end{split}
\]
as n goes to infinity.\\

For $\ell=0$, Then we have
\[
\begin{split}
 &P(\hat{w}'_{1} = 1,...,\hat{w}'_{j}=1,\hat{w}'_{j+1}=0,...,\hat{w}'_{K}=0 | w^*_{1} = 1,...,w^*_{\ell'} = 1,w^*_{\ell'+1} = 0,...,w^*_{K} = 0)  \\ &
= P \left(\frac{1-F_{\chi^{2}_{2j}}(-2\sum_{i=1}^j\log p'_i)}{1-F_{\chi^{2}_{2\ell'}}(-2\sum_{i=1}^{\ell'}\log p_i)} \le1 \right) \\
& = P \left(  \frac{1-F_{\chi^{2}_{2j}}(-2\sum_{i=1}^j\log p'_i)}{1-F_{\chi^{2}_{2(j+\ell')}}(-2\sum_{i=1}^{\ell'}\log p_i-2\sum_{i=1}^j\log p'_i)}\frac{1-F_{\chi^{2}_{2(j+\ell')}}(-2\sum_{i=1}^{\ell'}\log p_i-2\sum_{i=1}^j\log p'_i)}{1-F_{\chi^{2}_{2\ell'}}(-2\sum_{i=1}^{\ell'}\log p_i)} \le1 \right) \\
& \le 2P\left(\frac{1-F_{\chi^{2}_{2j}}(-2\sum_{i=1}^j\log p'_i)}{1-F_{\chi^{2}_{2(j+\ell')}}(-2\sum_{i=1}^{\ell'}\log p_i-2\sum_{i=1}^j\log p'_i)}\le1\right)\\
&+2P\left(\frac{1-F_{\chi^{2}_{2(j+\ell')}}(-2\sum_{i=1}^{\ell'}\log p_i-2\sum_{i=1}^j\log p'_i)}{1-F_{\chi^{2}_{2\ell'}}(-2\sum_{i=1}^{\ell'}\log p_i)} \le1\right) \\
& = 2\left\{P(\hat{w}_{1} = 0,...,\hat{w}_{\ell'}=0| w^*_{1} = 1,...,w^*_{\ell'} = 1)+P(\hat{w}'_{1} = 1,...,\hat{w}'_{j}=1| w'^*_{1} = 0,...,w'^*_{j} = 0) \right\}\\
& \rightarrow 0.
\end{split}
\]
as n goes to infinity.
\end{proof}

\section{The asymptotic Bahadur optimality of AW-Fisher's method}

\citet{bahadur1967rates} first discussed the asymptotic optimality of statistical test under the conditions of exact slopes and proportional sample sizes as shown in the previous section, which is called asymptotic Bahadur optimality (ABO) by using the ratio of the exact slopes of different statistical tests and the test
with larger exact slope is viewed as superior. \citet{littell1971asymptotic} showed that the Fisher's method is ABO.  In this paper we will use the
Bahadur relative efficiency as our primary measure of comparing p-value combination methods. Assuming two statistical tests are formed to test the same hypothesis
and have exact slopes $c_1(\theta)$ and $c_2(\theta)$ respectively, then the
ratio $c_1(\theta)/c_2(\theta)$ is the exact Bahadur efficiency of test $1$
relative to test $2$, and $c_1(\theta)/c_2(\theta)>1$ implies that test 1 is asymptotically more efficient than test 2.
\begin{lemma}
  For $\theta=0$, $-\frac{2}{n}\log(p)\rightarrow 0$ with probability one, i.e., if the effect size is $0$, the exact slope $c(\theta)$ of the statistical test is $0$.
\end{lemma}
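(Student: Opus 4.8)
The plan is to exploit the elementary distributional fact that, under the null $\theta=0$, a valid p-value is (exactly or asymptotically) uniformly distributed on $(0,1)$, so that $-\log(P)$ follows a standard exponential law whose distribution does not depend on the sample size $n$. Dividing such a stochastically bounded quantity by $n$ should annihilate it, forcing the exact slope to vanish; the only genuine work is to upgrade this heuristic to almost-sure convergence.

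First I would record the distributional fact. Writing $P=P_n$ for the p-value obtained from a sample of size $n$, under $\theta=0$ we have $P_n\sim\mathrm{Uniform}(0,1)$ (for a continuous test statistic with correctly specified null), and consequently $-\log(P_n)\sim\mathrm{Exp}(1)$ for every $n$. In particular $0<P_n<1$ almost surely, so the quantity $-\frac{2}{n}\log(P_n)$ is nonnegative, which will matter for pinning the limit at exactly $0$.

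Next, for each fixed $\varepsilon>0$ I would bound the tail probability directly, using the exponential law:
\[
P\left(-\frac{2}{n}\log(P_n)>\varepsilon\right)
= P\left(-\log(P_n)>\frac{n\varepsilon}{2}\right)
= e^{-n\varepsilon/2}.
\]
These probabilities are summable, $\sum_{n\ge 1}e^{-n\varepsilon/2}<\infty$, so by the first Borel--Cantelli lemma the event $\{-\frac{2}{n}\log(P_n)>\varepsilon\}$ occurs only finitely often with probability one. Intersecting the corresponding full-measure events over $\varepsilon=1/m$, $m\in\mathbb{N}$, and using nonnegativity of the sequence then yields $-\frac{2}{n}\log(P_n)\to 0$ almost surely, which is precisely $c(0)=0$.

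The point requiring care is the mode of convergence. Because the p-values $P_n$ for different $n$ are computed from overlapping or nested data and are therefore dependent, I cannot invoke a law of large numbers or any independence-based argument; the virtue of the first Borel--Cantelli lemma is that it needs only summability of the marginal tail probabilities, which the exponential bound supplies regardless of the dependence structure. A secondary caveat is that if the test statistic is only asymptotically pivotal, then $P_n$ is merely asymptotically $\mathrm{Uniform}(0,1)$; in that case I would replace the exact identity above by a bound of the form $P\left(-\log(P_n)>n\varepsilon/2\right)\le e^{-n\varepsilon/2}(1+o(1))$ valid for large $n$, which remains summable and leaves the conclusion unchanged.
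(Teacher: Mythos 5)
Your proof is correct, but it takes a genuinely different (and in fact stronger) route than the paper's. The paper's own argument simply observes that $-2\log(p)\sim\chi^2_2$ under the null, hence is tight, and concludes that multiplying by $1/n\to 0$ kills it; as written, tightness of the sequence only delivers convergence in probability, so the paper's proof does not actually justify the ``with probability one'' claim in the lemma statement. You instead compute the exact tail $P\left(-\tfrac{2}{n}\log(P_n)>\varepsilon\right)=e^{-n\varepsilon/2}$, note summability, and invoke the first Borel--Cantelli lemma, which yields genuine almost-sure convergence and, as you correctly emphasize, requires no assumption on the joint dependence of the $P_n$ across sample sizes. Since Bahadur's definition of the exact slope is stated in terms of almost-sure convergence, your argument is the one that actually matches the mode of convergence being asserted; the paper's tightness argument buys brevity at the cost of only establishing the weaker in-probability statement. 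Your secondary caveat about asymptotically pivotal statistics is a reasonable extra degree of generality not addressed in the paper.
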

\begin{proof}
For $\theta=0$, $-2log(p) \sim \chi^2_2$  since the p-value $p$ is distributed uniformly in $(0,1)$.
Since$-2log(p)$ is tight and $\frac{1}{n} \rightarrow 0$ with probability one, then  we have $-\frac{2}{n}\log(p)\rightarrow 0$
i.e., $c(0)=0$.
\end{proof}

\begin{lemma}
  Let $F_{\chi_k}(x)=P(\chi_k\leq x)$, where $\chi_k^2$ follows chi-square distribution with degree of freedom k. Then $log\left(1-F_{\chi_k}(x)\right)\rightarrow -\frac{1}{2}x^2(1+o(1))\;as\; x \rightarrow \infty$.
\end{lemma}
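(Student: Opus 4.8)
The plan is to work directly with the survival function of the chi distribution and isolate the Gaussian factor in its density. Writing $\chi_k$ for the chi random variable (so that $\chi_k^2$ is chi-square with $k$ degrees of freedom, and $1-F_{\chi_k}(x)=P(\chi_k^2>x^2)$), its density is $f_k(t)=c_k\,t^{k-1}e^{-t^2/2}$ on $(0,\infty)$ with $c_k=1/(2^{k/2-1}\Gamma(k/2))$. Hence $1-F_{\chi_k}(x)=c_k\int_x^\infty t^{k-1}e^{-t^2/2}\,dt$, and the whole problem reduces to the asymptotics of $I(x)=\int_x^\infty t^{k-1}e^{-t^2/2}\,dt$ as $x\to\infty$. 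Once $\log I(x)$ is pinned down, the constant $\log c_k$ is $O(1)$ and disappears after dividing by $x^2$.

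First I would obtain the leading-order behaviour of $I(x)$ by integration by parts, splitting the integrand as $t^{k-2}\cdot\big(t e^{-t^2/2}\big)$ and taking $v=-e^{-t^2/2}$. This yields $I(x)=x^{k-2}e^{-x^2/2}+(k-2)\int_x^\infty t^{k-3}e^{-t^2/2}\,dt$, where for $t\ge x\ge 1$ the bound $t^{k-3}\le x^{-2}t^{k-1}$ shows the remaining integral is at most $x^{-2}I(x)$, i.e.\ of strictly smaller order than the boundary term. Solving the resulting inequality (or iterating the integration by parts) gives $I(x)\sim x^{k-2}e^{-x^2/2}$, so that $\log I(x)=-\tfrac12 x^2+(k-2)\log x+O(1)$.

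Then I would finish by taking logarithms and dividing by $-\tfrac12 x^2$: since $(k-2)\log x=o(x^2)$ and $\log c_k+O(1)=o(x^2)$, we get $\log\big(1-F_{\chi_k}(x)\big)\big/\big(-\tfrac12 x^2\big)\to 1$, which is exactly the claim $\log(1-F_{\chi_k}(x))=-\tfrac12 x^2(1+o(1))$. As a robustness check one can avoid the exact prefactor and instead squeeze $I(x)$: a lower bound from integrating over $[x,x+1]$ gives $I(x)\ge x^{k-1}e^{-(x+1)^2/2}$, and an upper bound from $t^{k-1}\le M_\varepsilon e^{\varepsilon t^2}$ gives $I(x)\le M_\varepsilon\int_x^\infty e^{-(1/2-\varepsilon)t^2}\,dt$; both sandwich $\log I(x)\big/\big(-\tfrac12 x^2\big)$ toward $1$ after letting $\varepsilon\to 0$.

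The main obstacle is making sure the polynomial prefactor $t^{k-1}$ contributes only an $o(x^2)$ term to the logarithm and does not corrupt the leading $-\tfrac12 x^2$; relatedly, in the elementary squeeze route the naive factorization $e^{-t^2/2}=e^{-t^2/4}e^{-t^2/4}$ loses the constant in the exponent and yields $-\tfrac14 x^2$, so the argument must use the sharper bound $t^{k-1}\le M_\varepsilon e^{\varepsilon t^2}$ with $\varepsilon\to 0$ to recover the correct coefficient $\tfrac12$. The integration-by-parts route sidesteps this entirely, so I would present that as the primary argument.
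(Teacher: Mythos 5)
Your proof is correct, but it is worth pointing out that the paper does not actually supply an argument for this lemma: its entire proof is a citation to Bahadur (1960, p.~283), where the tail estimate for the chi distribution is established. Your derivation is therefore a genuinely different (and more self-contained) route. The integration-by-parts step $I(x)=x^{k-2}e^{-x^2/2}+(k-2)\int_x^\infty t^{k-3}e^{-t^2/2}\,dt$ together with the bound $t^{k-3}\le x^{-2}t^{k-1}$ correctly pins down $I(x)\sim x^{k-2}e^{-x^2/2}$, and since the lemma only asserts the crude logarithmic asymptotic $\log\bigl(1-F_{\chi_k}(x)\bigr)=-\tfrac12 x^2\bigl(1+o(1)\bigr)$, the polynomial prefactor and the normalizing constant $c_k$ are indeed swallowed by the $o(1)$ after dividing by $x^2$, exactly as you say. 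Your cautionary remark about the squeeze route is also well taken: the naive split $e^{-t^2/2}=e^{-t^2/4}e^{-t^2/4}$ would degrade the constant to $\tfrac14$, and the $\varepsilon$-dependent bound $t^{k-1}\le M_\varepsilon e^{\varepsilon t^2}$ is the right fix. What the citation-based proof buys the paper is brevity and a sharper statement (Bahadur's result gives the exact prefactor, not just the log-asymptotics); what your argument buys is that a reader can verify the claim without consulting the 1960 reference, which is arguably preferable given that the lemma is used quantitatively in the ABO theorem. One very minor presentational point: you should state explicitly that the lemma's hypothesis concerns $\chi_k=\sqrt{\chi_k^2}$, i.e.\ that $1-F_{\chi_k}(x)=P(\chi_k^2>x^2)$, since the paper's notation is ambiguous on first reading; you do make this identification at the outset, which is the correct reading given how the lemma is invoked later with the argument $\sqrt{-2\sum_i \hat w_i\log p_i}$.
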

\begin{proof}
The proof is given by \citet{bahadur1960stochastic} in page 283.\\~\\
\end{proof}

\citet{littell1971asymptotic} showed that given $K$ independent studies with
p-values, sample sizes and exact slopes $p_k,n_k,c_k(\theta), k=1, \dots, K$
respectively, the exact slope of Fisher's method is
$c_{Fisher}(\theta)=\sum_{k=1}^K \lambda_kc_k(\theta)$. Let $c_{AW}(\theta)$ be exact slope from AW-Fisher's method. Since the Fisher's method is ABO, i.e., $c_{Fisher}(\theta)$ is the
largest among all p-value combination procedures \citep{littell1973asymptotic}, under
the assumption $\theta_k\equiv\theta\neq 0$, 
to prove the AW-Fisher's method is also ABO, here we show that the exact slopes from AW-Fisher and Fisher's methods are the same.

\begin{theorem}
Under the conditions about exact slopes and proportion of sample sizes, we have $c_{AW}(\theta)=c_{Fisher}(\theta)$, i.e., the AW-Fisher's method is ABO.
\end{theorem}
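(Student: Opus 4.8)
The plan is to compute the exact slope $c_{AW}(\theta)$ of AW-Fisher directly from the behaviour of its achieved significance level, and then read off equality with $c_{Fisher}(\theta)$. Because the statement assumes $\theta_k\equiv\theta\neq 0$, every study has a positive slope, so the true weight vector is $\textbf{w}^*=(1,\dots,1)$. By Theorem~\ref{thm:consistency} the estimated weight satisfies $\hat{\textbf{w}}\to\textbf{w}^*$, so \emph{eventually} the minimiser in the definition of $s(\vec{\textbf{P}})$ is the full-weight vector and the observed statistic reduces to the ordinary Fisher quantity $s_{obs}=-\log\bigl(1-F_{\chi^2_{2K}}(-2\sum_{k=1}^K\log p_k)\bigr)$. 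The key object to control is then $L_{AW,n}=P_{H_0}(S\ge s_{obs})$, and I would show $-\tfrac{2}{n}\log L_{AW,n}\to c_{Fisher}(\theta)$.

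First I would pin down the null tail of $S$ by a two-sided bound. Under $H_0$, for each fixed weight $\vec{\textbf{w}}$ with $d=\sum_k w_k$ selected studies, $-2\sum_{k:w_k=1}\log P_k\sim\chi^2_{2d}$, so $L(T(\vec{\textbf{w}};\vec{\textbf{P}}))$ is exactly $\mathrm{Uniform}(0,1)$. Writing $\{S\ge s\}=\{\min_{\vec{\textbf{w}}}L\le e^{-s}\}=\bigcup_{\vec{\textbf{w}}}\{L(\vec{\textbf{w}})\le e^{-s}\}$, the single full-weight event gives the lower bound $P_{H_0}(S\ge s)\ge e^{-s}$, while a union bound over the $2^K-1$ nonzero weight vectors gives $P_{H_0}(S\ge s)\le(2^K-1)e^{-s}$. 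The crucial observation is that the inflation caused by minimising over weights is only the \emph{constant} factor $2^K-1$.

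Then I would combine these with the growth of $s_{obs}$. Setting $s=s_{obs}$ yields the sandwich $e^{-s_{obs}}\le L_{AW,n}\le(2^K-1)e^{-s_{obs}}$, hence
\[
\frac{2s_{obs}}{n}-\frac{2\log(2^K-1)}{n}\;\le\;-\frac{2}{n}\log L_{AW,n}\;\le\;\frac{2s_{obs}}{n}.
\]
By the chi-square tail lemma stated above, $-\log(1-F_{\chi^2_{2K}}(t))=\tfrac{t}{2}(1+o(1))$ as $t\to\infty$, so with $t=-2\sum_k\log p_k$ and the proportional-sample-size assumption $-\tfrac{2}{n}\sum_k\log p_k\to\sum_k\lambda_k c_k(\theta)=c_{Fisher}(\theta)$ I obtain $\tfrac{2s_{obs}}{n}\to c_{Fisher}(\theta)$, while $\tfrac{2\log(2^K-1)}{n}\to 0$. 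The sandwich then forces $c_{AW}(\theta)=c_{Fisher}(\theta)$, and since Fisher's method is ABO, so is AW-Fisher.

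The main obstacle I anticipate is justifying that the weight-minimisation does not change the exact slope, which is precisely the role of the constant factor $2^K-1$ being negligible on the $\tfrac{1}{n}\log$ scale: this is what makes the argument work for fixed $K$ but would fail if $K$ grew with $n$. A secondary point requiring care is bookkeeping of the mode of convergence --- Theorem~\ref{thm:consistency} is proved through probabilities tending to zero, so I would need to upgrade to the almost-sure (or in-probability) statement demanded by the exact-slope definition, and to confirm that the lower-order logarithmic corrections in $s_{obs}$ genuinely vanish after dividing by $n$.
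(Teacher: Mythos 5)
Your proposal follows essentially the same route as the paper's proof: the two-sided bound $L_{obs}\le P(S\ge s_{obs})\le (2^K-1)L_{obs}$ obtained from the single full-weight event on one side and the union bound over the $2^K-1$ nonzero weight vectors on the other, combined with the consistency of $\hat{\textbf{w}}$ and the chi-square tail lemma to conclude $-\tfrac{2}{n}\log L_{obs}\to\sum_k\lambda_k c_k(\theta)=c_{Fisher}(\theta)$. Your write-up is if anything slightly cleaner in stating explicitly that each $L(T(\vec{\textbf{w}};\vec{\textbf{P}}))$ is exactly $\mathrm{Uniform}(0,1)$ under $H_0$, and in flagging the mode-of-convergence bookkeeping that the paper glosses over, but the underlying argument is the same.
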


\begin{proof}
Let $\{p'_1, \dots, p'_j\}$ be a subset of $\{p_1, \dots, p_K\}$ with size
$j$ for $j=1,\dots,K$, denote $L_{obs}=\displaystyle\min_{\vec {\textbf{w}}} L(T(\vec{\textbf{w}}; \vec{\textbf{p}}_{obs}))$,then for a given test statistic $s_{obs}$,
\[
\begin{split}
P(S \ge  s_{obs}) &= P( \min_{\vec {\textbf{w}}} L(T(\vec{\textbf{w}}; \vec{\textbf{P}})) \le  L_{obs} )\\
		& =1- P( \min_{\vec {\textbf{w}}} L(T(\vec{\textbf{w}}; \vec{\textbf{P}})) \ge  L_{obs} ) \\
		& = 1-P( \bigcap_{j = 1}^K \bigcap_{{\textbf{w}}\in\Omega_j}
		L(T(\vec{\textbf{w}}; \vec{\textbf{P}})) \ge  L_{obs} ) \\
\text{Then by Bonferroni's inequality, we have:}\\
		&\le 1-\left(1-\sum_{j = 1}^K P(  \bigcup_{{\textbf{w}}\in\Omega_j}  L(T(\vec{\textbf{w}}; \vec{\textbf{P}})) \le L_{obs})\right)\\
                   &= \sum_{j = 1}^K
                    P\left(\bigcup_{{\textbf{w}}\in\Omega_j}\left\{-2\sum_{i=1}^j \log(P'_i) \ge \chi_{2j}^{-2}(L_{obs})\right\}\right) \\
                    &\le \sum_{j = 1}^K\sum_{{\textbf{w}}\in\Omega_j}P\left(-2\sum_{i=1}^j \log(P'_i) \ge \chi_{2j}^{-2}(L_{obs})\right)\\
                   & = (2^{K}-1)L_{obs} \\
\end{split}
\]
as $n$ goes to infinity.\\
Since the adaptive weights are consistent,and by Lemma 1 and 2, we have
\[
\begin{split}
\lim_{n \rightarrow \infty} -\frac{2}{n}\log(L_{obs}) & = \lim_{n \rightarrow \infty} -\frac{2}{n}\log(1-\chi_{2d(\hat{\textbf{w}})}^{2}(-2\sum_{i=1}^K \hat{w}_i\ log(p_i)))\\
&=\lim_{n \rightarrow \infty} -\frac{2}{n}\log\left(1-F_{\chi_{2d(\hat{\textbf{w}})}}\left(\sqrt{-2\sum_{i=1}^K \hat{w}_i\ log(p_i)}\right)\right) \\ &=-\frac{2}{n}\left\{\left(\sum_{i=1}^K \hat{w}_i\ log(p_i)\right)(1+o(1))\right\} \\
& \rightarrow \sum_{i=1}^K w^*_i \lambda_i c_i(\theta)
\end{split}
\]
So
\[
\lim_{n \rightarrow \infty} -\frac{2}{n} \log\left(P(S \ge s_{obs})\right) \geq \lim_{n \rightarrow \infty} -\frac{2}{n} \{ L_{obs} + \log(2^{K}-1)\} = \sum_{i=1}^K w^*_i \lambda_i c_i(\theta)
\]
On the other hand, since
\[
\begin{split}
P(S \ge s_{obs}) = &  P( \min_{\vec {\textbf{w}}} L(T(\vec{\textbf{w}}; \vec{\textbf{P}})) \le  L_{obs} )\\
=& 1-P( \min_{\vec {\textbf{w}}} L(T(\vec{\textbf{w}}; \vec{\textbf{P}})) \ge  L_{obs} )\\
\ge & 1-P(  L(T(\vec{\textbf{w}}; \vec{\textbf{P}})) \ge  L_{obs} )\\
= & P(\{-2\sum_{i=1}^j  \log(P'_i) \ge \chi_{2j}^{-2}(L_{obs})\}) \\
= & L_{obs} \\
\end{split}
\]
by utilizing the consistency of the adaptive weights and Lemma 3 and 4 again, we have
\[
\lim_{n \rightarrow \infty} -\frac{2}{n} \log(P(S \ge s_{obs})) \le \lim_{n \rightarrow \infty} -\frac{2}{n}\log(L_{obs})
 \rightarrow  \sum_{i=1}^K w^*_i \lambda_i c_i(\theta)
\]
Therefore $c_{AW}(\theta) \rightarrow \sum_{i=1}^K w^*_i \lambda_i c_i(\theta) $.
Other the other hand, $c_{Fisher}(\theta) = \sum_{i=1}^K \lambda_i c_i(\theta) = \sum_{i=1}^K w^*_i \lambda_i c_i(\theta)$.
Therefore, $c_{AW}(\theta)=c_{Fisher}(\theta)$ and so AW-Fisher's method is also ABO.
\end{proof}
\section{Simulations}


In this simulation, we use numerical approach to evaluate and verify the convergence rate of the weight estimates. We consider $K=4$ studies with the same sample size $n$ for all studies. The data are generated from standard normal distribution with $n/2$ from control and $n/2$ from treatment groups. We first generate independent identical random variable $X_{j1i}, X_{j2i} \sim N(0, 1), i = 1, \ldots, n$ if study $j$ has no treatment effect and $X_{2i} \sim N(\mu_i, 1), i = 1, \ldots, n$
if study $j$ with treatment effect. Let $\Delta_{\bar X_i} = \bar X_{j1} - \bar X_2$, then , $\Delta_{\bar X} \sim N(0, 4/n)$ under the null hypothesis and $\Delta_{\bar X} \sim N(\mu, 4/n)$ under the alternative hypothesis, where $\bar{X_j} = \frac{2}{n}\sum_i X_{ji}, j=1,2, i = 1,\dots,n/2$.
Therefore, p-value for study $j$ can be calculated as
\begin{equation}
p_j = 1-2\Phi(-|\Delta_{\bar X_j}|).
\label{eq:pvalues}
\end{equation}
where $\Phi$ is the cumulative density function of standard normal distribution.

In the first simulation, we estimate the convergence rate for studies with non-zero effective size, i.e. $P(\hat{w}_{k} = 0 | w^*_{k} = 1)$. We have shown in Section~\ref{sec:Properties} that
\[
\lim_{n \rightarrow \infty} P(\hat{w}_{k} = 0 | w^*_{k} = 1)
\rightarrow  n \exp\left\{ - \frac{c_{k}\lambda_{k}n}{2}\right\}.
\]
We evaluate this probability based on 1 million simulations for each $n$ from 200 to 1000, in which all 4 studies are considered with effect sizes $0.2,0.3,0.4,0.5$ for studies 1 to 4.

Given sample size $n$, 
the AW weight estimate can be obtained according to equation~(\ref{eq:weight}).
We generated scattered plot of $\hat P(\hat{w}_k=0 | w^*_k = 1) / n$ with respect to sample size and it is shown in Figure~\ref{fig:AWaccuracy}(a).
We further fitted a curve with functional form $ a n \exp\left\{ - bn\right\}$, where $a$ and $b$ are the parameters estimated from the data. The estimates are $\hat a = $ and $\hat b= $. The fitted curve agrees with the functional form $ a ~n \exp\left\{ - bn\right\}$ very well.

In the second simulation, we set the one study (study 4) with effective size 0 and all other studies with effect size 0.4 to estimate $P(\hat{w}_{k} = 1 | w^*_{k} = 0)$. As shown in Section~\ref{sec:Properties}, the converge rate for study with zero effect size is
\[
\begin{split}
\lim_{n \rightarrow \infty} P(\hat{w}_{k} = 1 | w^*_{k} = 0)
\rightarrow   O(1/n).
\end{split}
\]
We again use 1 million simulations to estimate this probability for $n=200 - 1000$ for study 4. And the AW weight estimate can be obtained according to equation~(\ref{eq:weight}).
The scattered plot of $P(\hat{w}_k=1 | w^*_k = 0)$ against sample size is shown in Figure~\ref{fig:AWaccuracy}(b). We further fitted a curve with functional form $ \frac{1}{a + bn}$, where $a$ and $b$ are the parameters estimated from the data and the fitted curve agrees with the simulated data very well.

\begin{figure}[htbp]
	\caption{Comparing accuracy of the new approach and permutation approach to obtain the AW p-values.
	The scattered plots are p-values from the two methods against the closed-form solution.}
	\label{fig:AWaccuracy}
	\centering
	\subfigure[$P(\hat{w_k}=0 | w^*_k = 1)$]{
		\includegraphics[height=0.45\columnwidth]{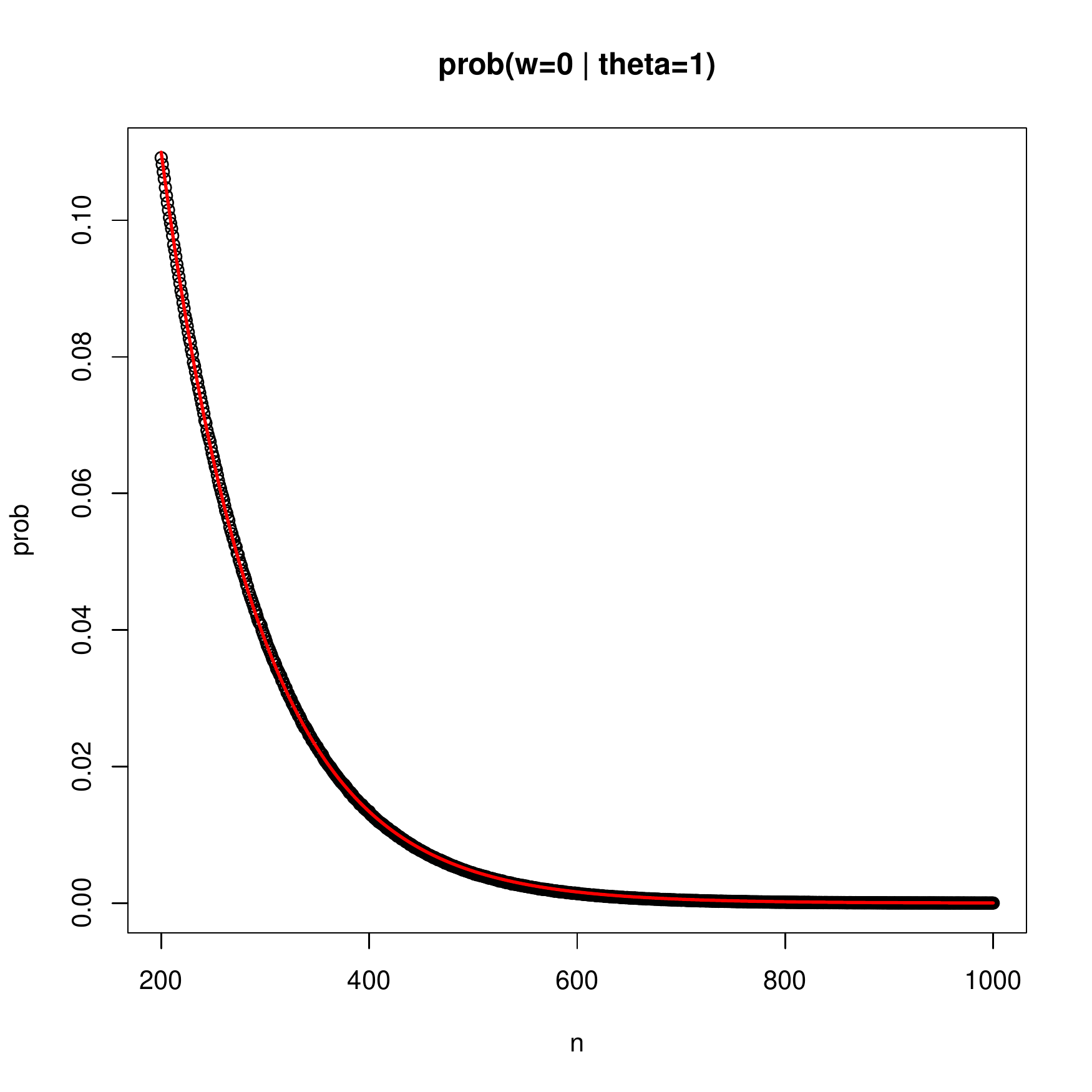}
		\label{fig:theta1w0}
	}
	\subfigure[$P(\hat{w_k}=1 | w^*_k = 0)$]{
		\includegraphics[height=0.45\columnwidth]{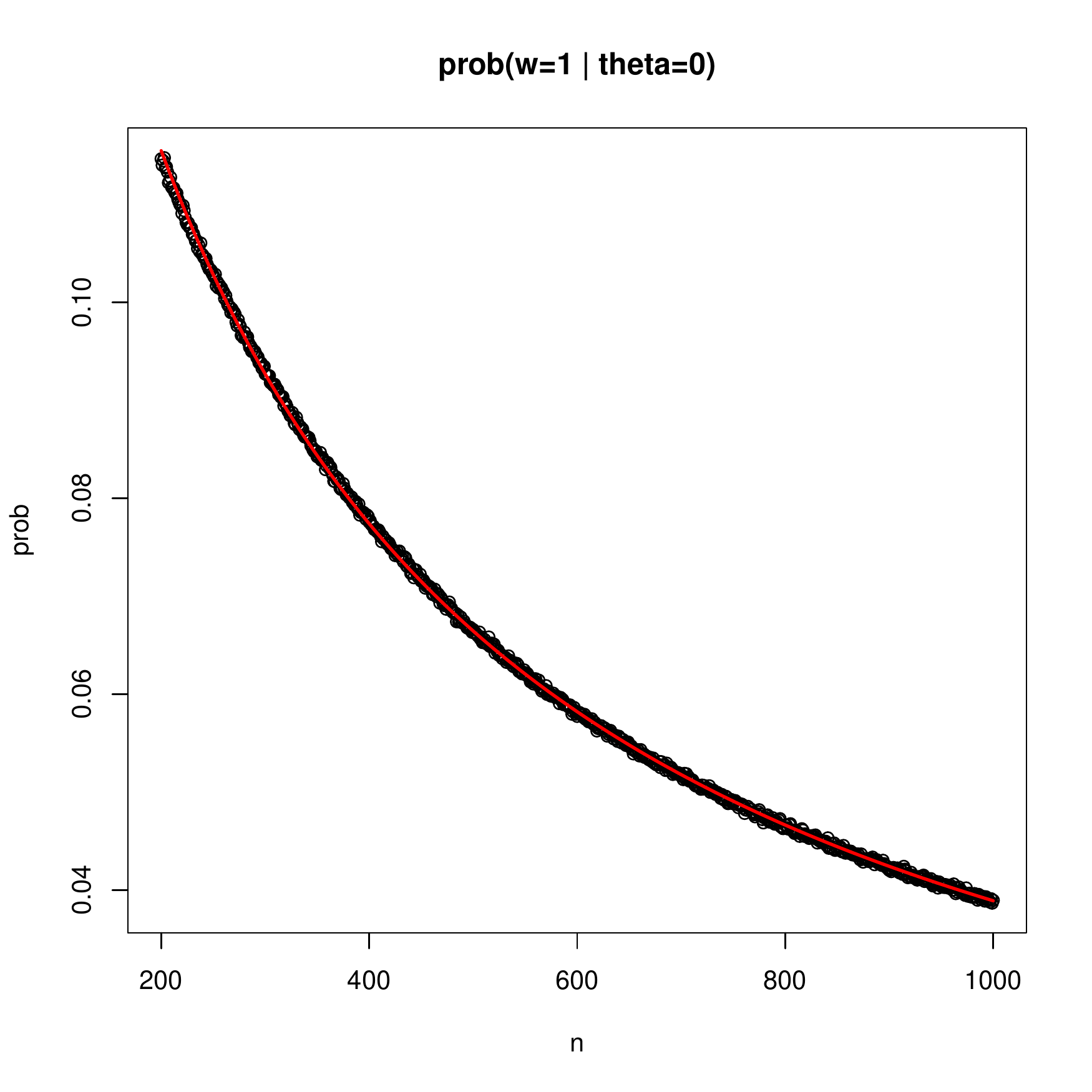}
		\label{fig:theta0w1}
	}
\end{figure}

\section{Conclusion}

The AW-Fisher's method proposed in \citet{Li2011adaptively} has shown to have many good properties such as admissibility and better overall power compared to min-P,
max-P and Fisher's methods in various situations. More importantly, the adaptive weights can provide additional information about heterogeneity of effect sizes in the different studies, a feature particularly appealing in the genomic meta-analysis.

For the practical usage of adaptive weights, such as uncertainty of adaptive weight estimates, has been discussed in Huo et. al (2017). A fast algorithm to estimate accurate p-values based on importance sampling also proposed in Huo et. al (2017).

In this paper, we further studied the asymptotic properties of AW-Fisher's method. We have shown the consistency of adaptive weights and asymptotic Bahadur optimality to reaffirm the validity and value of AW-Fisher's method. The asymptotic convergence rate of AW weight has been verified using simulation.

\bibliographystyle{biom}
\bibliography{references}
\label{lastpage}

\end{document}